\newcommand{\Lam}{\Lambda}
\newcommand{\Lamc}{{\bar{\Lambda}}}
\newtheorem{thm}{Theorem}
\newtheorem{theorem}[thm]{Theorem}
\newtheorem{corollary}[thm]{Corollary}
\theoremstyle{definition}
\theoremstyle{remark}
\providecommand{\qr}{\eqref}
\renewcommand{\d}{\,d}
\providecommand{\RR}{\mathbb{R}}
\providecommand{\ZZ}{\mathbb{Z}}
\providecommand{\CF}{\mathscr{F}}
\providecommand{\CB}{\mathscr{B}}
\providecommand{\CM}{\mathscr{M}}
\providecommand{\mc}{\mathcal}
\providecommand{\opn}{\operatorname}
\providecommand{\ol}{\overline}
\renewcommand{\P}{\mathsf{P}}
\providecommand{\var}{\opn{var}}
\providecommand{\Ordo}[1]{{O(#1)}}
\providecommand{\tl}{\tilde}
\providecommand{\T}{^{\!\mathrm{T}}}
\def\X{\mscr X}
\def\T{\opn{\msf{T}}}
\def\P{\opn{P}}
\title{Phase transitions in long-range Ising models and an optimal condition for factors of $g$-measures}
\author{Anders Johansson, Anders \"Oberg, and Mark Pollicott}
\date{}
\begin{document}
\maketitle
\begin{abstract}
  We weaken the assumption of summable variations in a paper by
  Verbitskiy \cite{verb} to a weaker condition, Berbee's condition, in order for a
  1-block factor (a single site renormalisation) of the full shift
  space on finitely many symbols to have a $g$-measure with a
  continuous $g$-function. But we also
  prove by means of a counterexample, that this condition is (within
  constants) optimal. The counterexample is based on the second of our main
  results, where we prove that there is an inverse
  critical temperature in a one-sided long-range Ising model which is
  at most 8 times the critical inverse temperature for the (two-sided) Ising
  model with long-range interactions.
\end{abstract} {\small \tableofcontents}
\section{Introduction}\noindent
The study of $g$-measures has a long history, with notable
achievements in the 1930's in Romania and France under the name {\em
  chains with complete connections} and which refers to a
generalization of Markov chains (and more generally, chains with
finite memory) on a finite set to chains that have infinite
memory. The $g$-measures are the not necessarily unique stationary distributions 
for such chains. We use the terminology of $g$-measures (with respect to a 
continuous transition probability function $g$) that was introduced by Keane in his
1972 paper \cite{keane} and used in other papers of ours related to this investigation
(\cite{johob}, \cite{jop1}, \cite{jop2}).

Consider a left full shift map $T$ on infinite strings of finitely
many symbols, $X=S^{{\mathbb Z}_+}$, i.e., $S$ is a finite set. Thus
$T$ acts on elements $x$ of $X$, $x=(x_0,x_1,x_2,\ldots)$, in the
following way (each $x_i$ belongs to $S$):
$$T(x_0,x_1,x_2,\ldots)=(x_1,x_2,\ldots).$$
A $g$-measure is a $T$-invariant Borel probability measure $\mu$ that
is associated with a continuous function $g:X\to [0,1]$ so that
$g=d\mu/(d\mu \circ T)$ with $\sum_{y\in T^{-1}x}g(y)=1$, for all
$x\in X$.

Let us now define precisely what a factor of a measure means
and, in particular, the precise form of our problem.
Let $X=S_1^{{\mathbb Z}_+}$ and $Y=S_2^{{\mathbb Z}_+}$ and let
$\pi: X\to Y$ be symbolic map in the sense that
$\pi=\pi_0\times\pi_1\times\dots$, i.e. if $y=\pi(x)$, then
$y_i=\pi_i(x_i)$. (This explains the terms ``single site
renormalization'' or the ``$1$-block'' factor.) Our problem is to find suitable
conditions for a $g$-measure $\mu$ on $X$ to be pushed down by
$\pi$ to a $g$-measure $\tl\mu$ on $Y$ (i.e., the 1-block factor
$\tl\mu = \mu \circ \pi^{-1}$ of $\mu$ is a $g$-measure).

In the literature, the best results are those by
Verbitskiy \cite{verb}, and Wang and Redig \cite{redig}, respectively,
where these authors assume summability of variations of the $g$-function,
which means that 
\begin{equation}\label{sumv}
\sum_{n=1}^\infty \var_n g <\infty,
\end{equation}
where 
$$\var_n g=\sup_{x\sim_n y}|g(x)-g(y)|,$$
where $x\sim_n y$ if $x,y\in X$ coincide in the first $n$
coordinates. In the ergodic theory literature one often imposes
summability of the sequence $\var_n \log g$, but these conditions are
equivalent so long as the $g$-function is regular, that is, if $g>0$.

The results and open questions concerning factors of $g$-measures are closely related 
to sufficient conditions for uniqueness of a $g$-measure. Doeblin and Fortet \cite{doeblin} 
famously showed that uniqueness of a $g$-measure follows from condition \eqref{sumv}.
This condition was weakened by Berbee \cite{berbee87} to the condition
\begin{equation}\label{berbee}
  \sum_{n=1}^\infty e^{-r_1-r_2-\cdots r_n}=\infty,
\end{equation}
where $r_n=\var_n \log g$. This includes the possibility of having
$\var_n \log g=1/n$, but not $\var_n \log g\geq 1/n^{\alpha}$ if
$\alpha<1$. In \cite{johob} the situation was improved considerably for such
sequences, by only requiring square summability of the
variations for uniqueness, that is 
\begin{equation}\label{sqvar}
\sum_n (\var_n \log g)^2<\infty 
\end{equation}for uniqueness. 
Berger {\em et al}.\ \cite{berger} proved that this condition is best possible in the sense that for any $\epsilon>0$, we can find a function $g$ that satisfies
$$\sum_{n=1}^\infty (\var_n \log g)^{2+\epsilon}<\infty,$$
so that there are multiple $g$-measures.
In \cite{jop2}, we improved \eqref{sqvar} as a sufficient condition for uniqueness (and the Bernoulli property) if we
only assume $\var_n \log g=o(1/\sqrt{n})$, as $n\to \infty$. 

Verbitskiy suggested in \cite{verb} that the class of
$g$-measures satisfying \eqref{sqvar} could be a natural class to
consider for being closed under taking $1$-block factors. We will 
present a counterexample to Verbitskiy's conjecture. The question 
remained if there is a broader natural class of $g$-functions to consider 
than those of summable variations to get a factored $g$-measure. 

In fact we prove among other things the following results. 

In {\bf Corollary 3} of {\bf Theorem 2} we prove that under the condition \eqref{berbee}, the factor $\mu \circ \pi^{-1}$ of the unique $g$-measure is also a $g$-measure.
This is an improvement of Verbitskiy's result in \cite{verb}. To prove Theorem 2 we combine Verbitskiy's methods with some estimates from Berbee's paper \cite{berbee89}.

In {\bf Theorem 4} we prove that there exists a $g$-function with $\var_n \log g=O(1/n)$, as $n\to \infty$, and a symbolic map $\pi:X\to Y$, such that the unique (because of condition \eqref{sqvar}) $g$-measure on $X$ has a $1$-block factor $\mu \circ \pi^{-1}$ which is not a $g$-measure.

In view of the fact that $r_n=1/n$ satisfies \eqref{berbee}, we see that Berbee's condition is optimal within constants for the $g$-measure property to hold under taking $1$-block factors.

Theorem 4 provides the counterexample to Verbitskiy's conjecture in \cite{verb} and is a construction of (a unique) $g$-measures when we have multiple Gibbs measures. The $g$-function is constructed from a general potential that admits two Gibbs measures, one of which dominates the other in that it gives a bigger value when integrating a strictly increasing function, and this gives a non-continuous induced $g$-function for a certain $1$-block factor of the original $g$-measure. We now present a brief explanation and context for the construction.

We exhibit two distinct eigen-measures of the adjoint $L^*$ of the transfer operator $L$ that acts on continuous functions $f$ on spaces like $X=S^{{\mathbb Z}_+}$ as
$$Lf(x)=\sum_{y\in T^{-1}x}e^{\phi(y)}f(y).$$
These correspond to probability measures $\nu$ that satisfy $L^*\nu=\lambda \nu$, where $\lambda>0$ is the spectral radius of $L$, and these eigen-measures coincide with one-sided Gibbs measures (see, e.g., \cite{walters}, Corollary 2.7, or the investigation in \cite{lop3}). We can also see that $g$-measures are special cases of such one-sided Gibbs measures, since for a given $g$-function $g$ we can define a transfer operator $L_g$ by
$$L_gf(x)=\sum_{y\in T^{-1}x}g(y) f(y),$$ where we have imposed $\sum_{y\in T^{-1}x}g(y)=1$ for all $x$. Thus the $g$-measures satisfy $L_g^*\mu=\mu$. In the construction of the counterexample we use that in general we do not have a unique Gibbs measure under the condition \eqref{sqvar}. In fact, it is known (\cite{FS},\cite{ACCN}) that there is a phase transition for the one dimensional two-sided sided Ising model with long-range interaction. We use this to obtain a one-sided potential $\phi$ for which $\var_n \phi=O(1/n)$, and such that there are multiple solutions $\nu$ to $L^*\nu=\lambda \nu$. This doesn't follow automatically, since we cannot use Sinai's famous ``lemma'' (\cite{sin}) that essentially says that we can study a two-sided Gibbs measure as a one-sided version if $\sum_n n\var_n \phi<\infty$. 

In {\bf Theorem 1} we prove that a ``one-sided'' version of the Ising model also has a phase transition at a critical inverse temperature at most 8 times the critical temperature of the original two-sided model. This implies that the critical level for obtaining multiple Gibbs measures are the same, i.e., when $\var_n \phi=O(1/n)$ (the variations are defined in an analogous way for two-sided systems). We conjecture that we have the same critical temperature for the one-sided Ising model.  We use this one-sided long-range Ising model
to construct a $g$-function for Theorem 4.
\newline

\noindent
{\bf Acknowledgement}. The authors would like to thank Jeff Steif for stimulating conversations.

\section{Gibbs measures, $g$-measures and the long-range Ising model}

\def\X{\mathit{X}} \def\CP{\mathcal{P}} \def\T{\mathit{T}}
\def\rst#1{_{#1}} \def\tailint#1{{(#1)}} \def\hdint#1{{[0,#1)}}
\def\ti#1{\rst{\tailint{#1}}} \def\he#1{\rst{\hdint{#1}}}

\subsection{Symbolic spaces}

For a measurable space $(X,\CF)$, let $\CM(X,\CF)$ denote the space of
bounded measures. Let $\mc C(X)$ denote the space of continuous
functions on a topological space $X$.  In what follows $S$ is a
countable set and $X$ a product set of the form $X=\prod_{i\in S} A_i$
where $A_i$ are finite sets. (A ``symbolic'' space.) We assume $X$ is
equipped with the product topology and the corresponding Borel algebra
$\mc F = \CB(X)$. We say that $X$ is homogeneous if the $A_i$ are all
equal, i.e. if $X=A^S$ for some fixed finite set $A$ of symbols. If
$S=\ZZ$ or $S=\ZZ_+$ we have the left-shift operator $T: X\to X$, by
$(\T x)_n=x_{n+1}$.

For any subset $F$ of $S$ an element $x=(x_s)\in S$ in $X$ can be
represented as $x=x_F \times x_{F^c}$, where
$x_F \in X_F := \prod_{s\in F} A_s$.  The sub sigma-algebra of $\mc F$
generated by $x_F$ is $\mc F_F$.  We write $x\sim_F y$ if $y_i=x_i$
for all $i\in F$. In the following, we use $\Lam$ to refer to a finite
set $\Lam\subset S$ and $\Lamc$ means the complement of $\Lam$.  We
write $\Lam_n\to S$ for taking limits with respect to an increasing
sequence $\{\Lam_n\}$ of finite sets such that, eventually,
$F\subset \Lam_n$ for any finite set $F$.

We will mostly work with $S$ being the set $\ZZ_+=\{0,1,2,\dots\}$ of
positive integers with finite sets $\Lam$ of the form $\Lam_n=[0,n)$.
In this case $x\sim_n y$ means $x \sim_{[0,n)} y$.  We use $(n)$ for
the complement $\Lamc=\ol{[0,n)}=[n, \infty)$ of $\Lam=[0,n)$. We prefer to write
$x\ti n$ for the tail sequence $(x_n,x_{n+1},\dots)$.

For a measure $\mu\in\CM(X)$ and a subset $F\subset S$, let
$\mu_F = \mu\circ(x_F)^{-1} \in \CM(X_F,\CF_F)$ denote the marginal
distribution of $x_F$.

\subsubsection{Stochastic dominance}
Here we will give some definitions that are used in the construction of our 
counterexample (Theorem 4). We assume that the symbolic space $X$ is 
partially ordered: $x\leq y$
meaning that $x_i\leq y_i$ for all $i\in S$, where we assume the
$x_i\in A_i$ are integers.  This order induces a partial order
$\preceq$ on the space $\CM(X)$ of probability measures on $X$: For
two probability measures $\mu,\mu' \in\CM(X)$ on $X$, we say that
$\mu'$ \emph{stochastically} \emph{dominates} $\mu$ if
$\mu'(f) \geq \mu(f)$ for every increasing function $f:X\to\RR$. We
write the stochastic dominance relation $\mu\preceq \mu'$. Strict
dominance, written $\mu\prec \mu'$, means that $\mu(f)<\mu'(f)$ for
every strictly increasing function $f$.

An equivalent formulation is that $\mu\preceq\mu'$ whenever we can
define $x'\in X$ with distribution $\mu'$ and $x\in X$ with
distribution $\mu$ on a common probability space, i.e., we \emph{couple}
$\mu$ and $\mu'$, in such manner that $\P(x \leq x') = 1$. Strict
dominance means that the coupling allows $\P(x<x')=1$.

\subsubsection{Potentials and one-point potentials}
Let $X$ be a symbolic space. In this paper, we will refer to a \emph{potential} 
$\phi$ as an equivalence class of pointwise limits of functions on $X$ with the equivalence 
relation that whenever $x$ and $y$
coincide outside a finite set, the difference
$$\phi(x)-\phi(y):=\lim_{\alpha} (\phi_{\alpha}(x)-\phi_{\alpha}(y))$$  
is well-defined (both with respect to the limit and to the equivalence relation) 
and is finite. 

For a potential $\phi$ and a finite
set $\Lam\subset S$ and an arbitrary but fixed mapping
$\Lam\to K(\Lam)\in\RR$, assigning a constant ``ground potential'' to
each finite set $\Lam$, we can define a function
$$\phi_\Lam(x) = \max_{y} \{\phi(x)-\phi(y) + K(\Lam): y\,\sim_\Lamc\, x\}, $$
with $\phi_{\emptyset}(x)=0$.  
Note that the difference 
$$\phi_\Lam(x)-\phi_\Lam(y)=\phi(x)-\phi(y)$$
for all $x$ and $y$ such that $x\sim_\Lam y$. 

We will mostly assume (except for the random cluster model later) that the potentials are
\emph{continuous} with respect to the product topology, which means that
\begin{equation}\label{contcond}
  \text{the functions $\phi_\Lam(x)$ are all continuous on $X$.}
\end{equation}

For a given sequence $\Lam_n\nearrow S$, $n\geq0$, we can represent
the potential $\phi(x)$ as the limit
$$ \phi(x) = \lim_{\Lam_n\nearrow S} \phi_{\Lam_n}(x). $$
The limit may not exist, but the difference 
$$\phi(x) - \phi(y) = \lim  (\phi_{\Lam_n}(x) - \phi_{\Lam_n}(y)) $$
should exist.  We can also represent $\phi$ as a limit in the sense of the following infinite series
$$ \phi(x) = \sum_{n=0}^\infty \phi_n(x), $$
where $\phi_n(x)=\phi_{\Lam_n} - \phi_{\Lam_{n-1}}$.  The sequence
$\phi_n(x)\in \mc C(X)$ is the \emph{one-point potential} of
$\phi$. (It is the potential of the point
$x_{\Lam_n\setminus \Lam_{n-1}}$. In the case $S=\ZZ_+$ it is the potential of
the point $x_{n-1}$.)

\subsection{Gibbs distributions}
A \emph{Gibbs distribution} $\mu$ on $X$ with potential $\phi$ is a
probability distribution $\mu$ on $X$ such that for any finite set $\Lam$
the conditional probability of $x$ (or equivalently $x_\Lam$) given
$x_{\Lamc}$ is proportional to $\exp(\phi_\Lam(x))$. Such a specification is automatically consistent, since
the ratio
$$\exp(\phi_{\Lam}(x))/ \exp(\phi_{\Lam}(y))=\exp(\phi(x)-\phi(y))$$
is constant for $x\sim_{\Lamc}y$.

That is, we specify that
a version of the conditional probability $\mu(\cdot|\CF_\Lamc)$ satisfies 
\begin{equation}\label{gibbs2}
  \mu(x\mid x_\Lamc) = \rho_{\phi,\Lam}(x\mid x_\Lamc) := 
  \frac{\exp\left(\phi_\Lam(x) \right)}{Z_\Lam(x_\Lamc)},
\end{equation}
where $Z_\Lam=Z_\Lam(\phi)$ denotes the local partition function
$$
Z_\Lam(x_\Lamc) = Z_\Lam(x_\Lamc;\phi)=\sum_{y_\Lam\in X_\Lam}
\exp\left(\phi_\Lam(y_\Lam \times x_\Lamc) \right).
$$
For a given potential $\phi$, we denote the set of corresponding
Gibbs distributions by $\mc G(\phi)$.

A probability measure $\mu$ is $\Lam$-Gibbsian with respect to $\phi$,
$\mu\in\mc G_\Lam(\phi)$, if for all $f\in \mc C(X)$
$$ \mu(f) = \int f(x)\, \rho_{\phi,\Lam}(x\mid x_\Lamc) \d\mu_\Lamc(x_\Lamc). $$
Thus $\mc G(\phi) = \cap_\Lam \mc G_\Lam(\phi)$.

Alternatively, one can define $\mc G(\phi)$ as the set of weak limits
in $\CM(X)$ of consistent sequences of finite support probability measures of the form given
in \qr{gibbs2} with respect to some filtration $\Lam\nearrow S$.  For
a fixed $\xi\in X$ and a filtration $\Lam$, we say that a limit
\begin{equation}\label{gibbs3}
  \mu = \lim_{\Lam\nearrow S} \rho_{\phi,\Lam}(x| \xi_\Lamc)
\end{equation}
corresponds to a Gibbs measure with \emph{boundary condition} $\xi$.

\subsubsection{The case $S=\ZZ_+$} 
In the case $S=\ZZ_+$, we use $\Lam_n=[0,n)$ and the one-point
potential sequence
$$\phi_n(x) = \phi_{[0,n+1)}(x) - \phi_{[0,n)}(x), $$
and the potential representation 
$$ \phi(x) = \sum_{n=1}^\infty \phi_n(x). $$
If 
$$ \phi_n(x) = \phi_n(x_n,x_{n+1},\dots) = \phi_0(\T^n x) $$
then we say the potential sequence is \emph{homogeneous}. 

If, for the sequence of potentials $(\phi_n)$, we have that
$$ \sum_{x_{n}\in A_{n}} \exp(\phi_n(x_n,x_{n+1},\dots)) = 1, \quad \forall n\
\forall x $$ then $(\phi_n)$ is said to be
\emph{normalised}. Equivalently, we have that the local partition
functions $Z_n(x):=Z_{[0,n)}(x) \equiv 1$ for all $n\geq 0$.  

For a normalised and homogeneous sequence of one-point potentials
$(\phi\circ T^n)$ the function $q(x)=e^{\phi_n(x)}$ is referred to as
a \emph{$g$-function} and the corresponding Gibbs measures are
\emph{$g$-measures}. A $g$-measure $\mu$ is always shift-invariant,
i.e.\ $\mu=\mu\circ T^{-1}$.

\subsection{The long-range Ising model}

A relevant example of a potential and a Gibbs measure is the
\emph{long-range (ferromagnetic) Ising model} on $$U=\{-1,1\}^S,$$
where we compare the two-sided case $S=\ZZ$ with the one-sided case
$S=\ZZ_+$.  Let $S^{(2)}$ denote the set of unordered pairs $ij$ of
elements in $S$.  We refer to $S^{(2)}$ as the complete graph on $S$
and its elements $ij$ as edges.  We usually exclude loops, i.e. the edges of the form $ii$ for $i\in S$.

The long-range Ising model is defined by the potential $\varphi(u)$,
$u=(u_i)\in U$, given by
\begin{equation}
  \varphi(u) = \varphi(\alpha,\beta)(u) := 
  \beta \sum_{ij\in S^{(2)}} \frac{u_iu_j}{|i-j|^\alpha}, 
  \quad \alpha >1, \ \beta\geq 0.
  \label{potalpha}
\end{equation}
The potential $\varphi(\alpha,\beta)$ is not well-defined for
$\alpha\leq 1$.

For $S=\ZZ_+$, we obtain the potential $\varphi$ from the homogeneous
sequence of potentials $(\varphi_0(T^n u))$ where (with $\alpha=2$)
\begin{equation} \label{potone} 
\varphi_0(u) = K+\beta \sum_{j=1}^\infty \frac{u_0u_j}{j^2},
\end{equation}
where we can choose $K$ to be arbitrary.

Let $\mc I=\mc I(\beta,\alpha,S)$ denote the set
$\mc G(\varphi(\alpha,\beta))$ of Gibbs measures for the
Ising-potential above.  We use $\nu_+$ and $\nu_-$ in $\mc I$ to
denote the Gibbs-measures obtained as limits by taking the constant
sequences $\xi=\ol{+1}$ and $\xi=\ol{-1}$ as boundary conditions,
respectively.  In the two-sided case, it is well-known (see
\cite{ACCN}) that for $\alpha$ in the range $(1,2]$ there is a
\emph{critical inverse temperature} $\beta_c=\beta_c(\alpha)$. This
means that for $\beta>\beta_c$ we have the strict stochastic dominance
relation $\nu_+ \succ \nu_-$. Moreover, $\nu_+=\nu_-$ whenever
$\beta<\beta_c$ so that the Gibbs measure is unique. We also have
that $\nu_+\succ\nu_-$, when $\beta=\beta_c$ but we do not need this
result. For $\alpha>2$, we have uniqueness of Gibbs measures.

We use this result to prove that there is a critical inverse temperature in
the one-sided case as well. We need to establish the existence of a
critical temperature in order to construct a counterexample to
Verbitskiy's conjecture later.
\begin{theorem}\label{th1}
  For the one sided case, i.e.\ considering
  $\mc I = \mc I(\alpha,\beta,\ZZ_+)$, $1<\alpha\leq 2$ and
  $\beta\geq 0$, there is a similar critical inverse temperature
  $\beta^+_c$. Moreover, $\beta_c^+(\alpha) \leq 8\beta_c(\alpha)$.
\end{theorem}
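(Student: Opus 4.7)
The plan is to use the known two-sided phase coexistence result of \cite{ACCN} and transfer it to the one-sided setting via a reflection-doubling comparison. First, since $\varphi(\alpha,\beta)$ is ferromagnetic, FKG and GKS apply, so the weak limits $\nu_\pm^+\in\mc I(\alpha,\beta,\ZZ_+)$ with constant boundary conditions $\overline{\pm 1}$ taken along the filtration $\Lam_n=[0,n)$ exist, satisfy $\nu_-^+\preceq\nu_+^+$, and the magnetization gap $m^+(\beta):=\nu_+^+(u_0)-\nu_-^+(u_0)\ge 0$ is nondecreasing in $\beta$ by GKS monotonicity. Thus $\beta_c^+:=\inf\{\beta\ge 0:m^+(\beta)>0\}\in[0,\infty]$ is well-defined, and the task is to bound $\beta_c^+\le 8\beta_c$.

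The geometric tool is a reflection identity. For $u\in\{-1,+1\}^{\ZZ_+}$, extend to the symmetric configuration $\hat u\in\{-1,+1\}^\ZZ$ by $\hat u_{-i-1}:=u_i$ for $i\ge 0$. Splitting the two-sided sum $\sum_{i<j\in\ZZ}\hat u_i\hat u_j/(j-i)^\alpha$ into $\ZZ_+$-pairs, $\ZZ_-$-pairs (which duplicate the $\ZZ_+$-sum by the reflection), and cross pairs, one obtains the identity of potentials
\[
  \varphi(\alpha,\beta)(\hat u) \;=\; 2\,\varphi^{+}(\alpha,\beta)(u) + W_\beta(u),\qquad W_\beta(u):=\beta\sum_{j,k\ge 0}\frac{u_j u_k}{(j+k+1)^\alpha},
\]
where $\varphi^+$ is the one-sided version of \eqref{potalpha} and $W_\beta$ is a ferromagnetic cross-interaction. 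This identity is the bridge between the two models.

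Using GKS/FKG monotonicity for the ferromagnetic long-range Ising, the comparison step consists in proving the stochastic domination
\[
  \nu_+^{(2)}(\alpha,\beta)\big|_{\ZZ_+}\;\preceq\;\nu_+^{+}(\alpha,8\beta)
\]
of the $\ZZ_+$-marginal of the two-sided $+$-state by the one-sided $+$-state at inverse temperature $8\beta$. Granted this, $\beta>\beta_c$ forces $\nu_+^{(2)}(u_0)>0$ by \cite{ACCN}, and so $\nu_+^+(u_0)>0$ at $\beta^+=8\beta$; by the spin-flip symmetry $\nu_-^+(u_0)=-\nu_+^+(u_0)<0$, so $\nu_+^+\neq\nu_-^+$, yielding $\beta_c^+\le 8\beta_c$. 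Heuristically the factor $8$ reflects two competing effects: the coefficient $2$ in front of $\varphi^+$ in the reflection identity forces the one-sided inverse temperature to be at least doubled, while the missing ferromagnetic contribution from the $\ZZ_-$-spins (which in the two-sided $+$-phase, by reflection symmetry, exerts a $+$-field on $u_i$ of the same order as the one from the $\ZZ_+$-spins) requires a further multiplicative slack; the combination is absorbed by a factor $8$.

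The main obstacle is the stochastic-dominance step. The cross-term $W_\beta$ is ferromagnetic but not uniformly bounded in $u$, so one cannot treat it as a naive Radon-Nikodym perturbation; instead GKS must be applied at the level of local specifications on finite boxes $[-n,n)$ versus $[0,n)$, with a careful passage to the thermodynamic limit. Quantifying that the factor $8$ (and not some smaller constant) actually suffices to compensate the missing $\ZZ_-$-field is the most delicate point, and is naturally handled by a monotone coupling between the one-sided specification at $8\beta$ and the two-sided specification at $\beta$ together with an FKG/GKS-based iteration.
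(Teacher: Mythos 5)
Your setup (existence of $\beta_c^+$ via GKS monotonicity of the magnetization gap, and the reflection identity $\varphi(\hat u)=2\varphi^+(u)+W_\beta(u)$) is fine, but the proof has a genuine gap exactly where the theorem lives: the stochastic domination $\nu_+^{(2)}(\alpha,\beta)\big|_{\ZZ_+}\preceq \nu_+^+(\alpha,8\beta)$ is only asserted, and you yourself flag it as ``the most delicate point'' to be ``naturally handled'' by an FKG/GKS-based coupling. That step is not a routine application of FKG/GKS/Holley. The $\ZZ_+$-marginal of the two-sided plus state is not given by the pair specification at inverse temperature $\beta$: integrating out the $\ZZ_-$ spins produces an effective interaction with a configuration-dependent nonnegative field $h_i(\omega)$, and a Holley-type single-site comparison between ``couplings multiplied by $8$, no field'' and ``couplings at $\beta$ plus a nonnegative field'' fails in general --- test the criterion at a site whose conditioning configuration is predominantly $-1$: multiplying the couplings by $8$ makes the conditional probability of $+1$ \emph{smaller}, while the extra field still pushes the other measure up. So it is not even clear that the claimed domination can be proved by the route you indicate, and in any case the factor $8$ is never derived; the closing heuristic (``the combination is absorbed by a factor $8$'') is not an argument. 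The reflection identity itself is also never actually used: the two-sided plus state is not supported on reflection-symmetric configurations, so the identity does not directly compare the two Gibbs measures (one would need something like reflection positivity, which you do not invoke).

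For contrast, the paper sidesteps all of this by moving to the random-cluster representation, where every comparison needed is a standard monotonicity statement. There, $\nu^+\succ\nu^-$ is equivalent to percolation of the FK measure $\psi(\beta,2,\cdot)$, and the proof is: (i) for $\beta\geq\beta_c$ the two-sided Bernoulli ($q=1$) graph at $\beta$ percolates, since $\psi(\beta,1,\ZZ)\succeq\psi(\beta,2,\ZZ)$; (ii) folding $\ZZ$ onto $\ZZ_+$ by $i\mapsto|i|$ maps infinite clusters to infinite clusters, and the pushed-forward graph is a one-sided Bernoulli graph with edge probabilities $1-\exp\{-\beta(2|i-j|^{-\alpha}+2|i+j|^{-\alpha})\}\leq 1-\exp(-4\beta|i-j|^{-\alpha})$, hence dominated by $\psi(4\beta,1,\ZZ_+)$, which therefore percolates; (iii) the comparison $\psi(p,2,\ZZ_+)\succeq\psi\bigl(\tfrac{p}{p+(1-p)2},1,\ZZ_+\bigr)$ gives $\psi(8\beta,2,\ZZ_+)\succeq\psi(4\beta,1,\ZZ_+)$, so the one-sided FK measure at $8\beta$ percolates and the one-sided model has a phase transition. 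The factor $8=4\times 2$ thus comes out of explicit inequalities rather than a hoped-for coupling. If you want to keep your reflection idea, the natural fix is precisely this: implement the reflection at the level of the percolation picture (your cross-term $W_\beta$ corresponds to the folded edges through $|i+j|$), where monotonicity in the edge intensities is available and the quantitative loss can be tracked.
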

The proof is postponed until Section 4.

It should be remarked that the factor $8$ in the bound on $\beta^+_c$
is an artifact of the proof. We conjecture that $\beta^+_c$ is indeed
equal to the two-sided $\beta_c$ for the relevant values of $\alpha$.

\section{An optimal condition for factors of $g$-measures}

\subsection{Uniqueness of Gibbs measures and Berbee's condition}

\subsubsection{Transfer operator}

The study of Gibbs measures in \cite{berbee89} is based on the
analysis of the generalised ``transfer operator'' $L=(L_n)$ and its
dual $L^* = (L^*_n)$: For a given sequence of potentials $(\phi_n)$,
let $\CM\ti n=\CM(\X\ti n,\CF\ti n)$.  We define the transfer operator
$L=(L_n)$ as the system of maps
\begin{equation*}
  \mc C(\X\ti0) \xrightarrow{\ L_0\ } \mc C(\X\ti1) \xrightarrow{\ L_1\ }   
  \cdots  \mc C(\X\ti n) \xrightarrow{\ L_n\ } \cdots 
\end{equation*}
where $L_{n}: \mc C(\X\ti {n})\to \mc C(\X\ti{n+1})$ is given by
$$ 
(Lf)(x\ti{n+1}) = \sum_{x_{n}\in A_{n}} \phi_{n-1}(x_{n}, x\ti {n+1})
f(x_{n}, x\ti {n+1}).
$$
Dually, we obtain the system $L^*$ of maps between measures
\begin{equation*}
  \CM\ti0 \xleftarrow{\ L^*_0\ } \CM\ti1 \xleftarrow{\ L^*_1\ }   
  \cdots \CM\ti n \xleftarrow{\ L^*_n\ } \cdots 
\end{equation*}
where $L_{n+1}^*: \CM\ti{n+1}\to \CM\ti{n}$ is given by ``multiplication
by $\exp(\phi_n(x))$''.

A Gibbs measure $\mu$ on $\X$ corresponds to a projective limit for
the system above: Recall that $\mu\ti n$ denotes the restriction of
$\mu$ to $\CF\ti n$. If
$$ \mu_n := \frac 1{Z_n} \mu\ti n, \quad n\geq 0 $$
then it is readily checked that the sequence
$(\mu_0,\mu_1,\dots)\in\prod_n\CM\ti n$ satisfies
$$ \mu_n = L^* \mu_{n+1}. $$
We write $\mu=\mu_0=(L^*)^n\mu_n$.  It is also clear that if $L^*$ is
defined as multiplication by $e^{\phi_n}$ then any such sequence
$(\mu_0,\mu_1,\dots)$ where $\mu_n=L^*\mu_{n+1}$ gives the Gibbs measure
$\mu_0\in\mc G(\sum_n \phi_n)$.

\subsubsection{Berbee's condition}
The relation between smoothness of the transfer operator and
uniqueness of Gibbs measures is a central object of study. We measure
the smoothness of the sequence of potentials $(\phi_n)$ with uniform
variations $r_k$, $k\geq1$, defined as
\begin{equation}\label{rdef}
  r_k = \sup_n \var_k \phi_n(x\ti n).
\end{equation}
It is shown in \cite{berbee89} that the Gibbs measure
$\mu\in\mc G(\sum_n \phi_n)$ is unique whenever
\begin{equation}\label{berbeecond}
  \sum_{n=1}^\infty e^{-r_1-\cdots-r_n}=\infty. 
\end{equation}
We refer to this as ``Berbee's condition''.

For two bounded measures $\nu,\tl\nu$ on a symbolic space $X$, we
define
$$ \rho_k(\nu,\tl\nu) = \inf_{C\in\CF_k} \frac{\tl\nu(C)}{\nu(C)}. $$
Let $P=(P_{ij})_{i=0,j=0}^{\infty,\infty}$ be the Markov matrix given
by $P_{00}=1$ and $P_{0j}=0$ and for $i>0$
$$ 
P_{ij} = \begin{cases}
  0 & j < i-1 \\
  e^{-r_{j}} & j=i-1 \\
  e^{-r_{j+1}} - e^{-r_{j}} & j\geq i
\end{cases},
$$
and where $r_j=r_j((\phi_n))$ are the variations from \qr{rdef}.  Let
$X=B_0\times B_1 \times \dots$ and $X'=B_1\times B_2\times \dots$ and
consider a transfer operator $L: \mc C(X)\to \mc C(X')$ given by
$$ Lf(x') = \sum_{x_0} f(x_0,x') e^{\phi(x_0,x')}.$$

Let $N\geq 1$. A probability measure $\nu\in\CM(\X)$ is
$[0,N)$-Gibbsian if it is extended by $L$ from the restriction
$\nu\ti N$ on $\X\ti N$ (the ``boundary condition''). That is, if we
have
$$\nu = (L^*)^N \nu_N = L_0^* L_1^* \dots L^*_{N-1} \nu\ti N. $$ 
Berbee shows with a clever induction argument that for any pair of
$[0,N)$-Gibbsian measures $\nu,\tl\nu$ it we have that
\begin{equation}\label{mixrate}
  \rho_k(\nu,\tl\nu) \geq \P(Z_N = 0|Z_0=k)^2,
\end{equation}
where $Z_t$, $t=0,1,\dots$ denotes a Markov chain on state-space
$[0,\infty)$ with transition matrix $P$. Note that $Z_t$ is absorbing
at state $0$ and that Berbee's condition \qr{berbeecond} is equivalent
to stating that $Z_t$ is recurrent and hence that
$$ \lim_{N\to\infty} \P(Z_N = 0|Z_0=k)^2 = 1.  $$
The uniqueness of the Gibbs measure then follows.
 
\subsection{One-factors and Gibbsianity}

\def\Y{Y} \def\y{y}

Let $X = \prod_{n=1}^\infty A_n$ and $Y=\prod_{n=1}^\infty \tl{A}_n$
be two symbolic spaces as defined above.  For our purposes a
\emph{symbolic map} is a map $\pi: \X \to \Y$ obtained from a sequence
of surjective coordinate-wise maps $\{\pi_i: A_i\to \tl A_i\}$ so that
$\pi(x)_i=\pi_i(x_i)$.
\begin{theorem}\label{thm1}
  Assume that $\mu$ is Gibbs measure on $X$ with respect to the
  potential $\sum_n \phi_n$ and that the sequence $(\phi_n)$ of
  one-point potentials satisfies Berbee's condition
  \qr{berbeecond}. For any symbolic map $\pi:\X\to\Y$ the following hold.
    \begin{enumerate}[(i)]
  \item\label{cont1} The conditional probability measure
    $\mu(x|\y)\in\CM(X)$ is a continuous function of $\y=\pi(x)\in\Y$.
  \item\label{gibbs1} If, in addition, the potential sequence
    $(\phi_n)$ is normalised, then the distribution of $\y$,
    $\tl\mu = \mu\circ\pi^{-1}$ is given by the normalised potential
    sequence $(\log \tl p_n)$ where
    \begin{equation}\label{pdef1}
      \tl p_n(\y\ti{n}) = 
      \int \sum_{x_n\in\pi_n^{-1}(y_n)} 
      e^{\phi_n(x_n,x\ti{n+1})}\d\mu(x\ti{n+1}\mid\y\ti{n+1}).
    \end{equation}
  \end{enumerate}
\end{theorem}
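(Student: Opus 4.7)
The plan is to prove part (\ref{cont1}) as the technical core by a direct application of Berbee's mixing estimate \qr{mixrate}, after which part (\ref{gibbs1}) follows by a short tower-property computation.

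For (\ref{cont1}), view $\mu(\,\cdot\,|\y)$ as a probability measure on the restricted product space $X^\y := \prod_n \pi_n^{-1}(y_n)\subset X$, which carries a Gibbsian specification for the restriction of $(\phi_n)$ to $X^\y$; since restriction can only reduce the uniform variations $r_k$ of \qr{rdef}, Berbee's condition \qr{berbeecond} still holds on $X^\y$, so $\mu(\,\cdot\,|\y)$ is uniquely determined among Gibbs measures on $X^\y$. Now take $\y,\y'\in\Y$ with $\y\he N=\y'\he N$, so that $\pi_n^{-1}(y_n)=\pi_n^{-1}(y'_n)$ for $n<N$, and fix a cylinder $A\in\CF\he k$ with $k\le N$. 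Conditioning $\mu(\,\cdot\,|\y)$ and $\mu(\,\cdot\,|\y')$ on their tails $x\ti N$ and $x'\ti N$ produces, on $\CF\he N$, two $[0,N)$-Gibbsian measures on the \emph{same} space $\prod_{n<N}\pi_n^{-1}(y_n)$, with the same potential sequence $(\phi_n)$ and differing boundary values. Berbee's bound \qr{mixrate}, used in its symmetric form, then yields
\[
\bigl|\mu(A\mid\y,x\ti N)-\mu(A\mid\y',x'\ti N)\bigr|\;\le\;1-\P(Z_N=0\mid Z_0=k)^2
\]
uniformly in the tails. Averaging over $(x\ti N,x'\ti N)$ and sending $N\to\infty$, \qr{berbeecond} drives the right-hand side to $0$, so $|\mu(A|\y)-\mu(A|\y')|\to 0$ uniformly over such $\y,\y'$. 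Since cylinders generate $\CF$, this is weak-$*$ continuity of $\y\mapsto\mu(\,\cdot\,|\y)$.

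For (\ref{gibbs1}), with $(\phi_n)$ normalised, the $g$-function relation $\mu(x_n\mid x\ti{n+1})=e^{\phi_n(x_n,x\ti{n+1})}$, together with the fact that $\y\ti{n+1}=\pi(x\ti{n+1})$ is $\sigma(x\ti{n+1})$-measurable, gives via the tower property
\[
\tl\mu(y_n\mid\y\ti{n+1})
= \int\sum_{x_n\in\pi_n^{-1}(y_n)} e^{\phi_n(x_n,x\ti{n+1})}\,\d\mu(x\ti{n+1}\mid\y\ti{n+1})
= \tl p_n(\y\ti n).
\]
Continuity of $\tl p_n$ in $\y\ti n$ follows from (\ref{cont1}) applied to the tail system (the integrand is a continuous bounded function of $x\ti{n+1}$ which becomes eventually independent of $y_n$ along any convergent $\y\ti n$ in the product topology), and $\sum_{y_n}\tl p_n\equiv 1$ inherits directly from the normalisation of $(\phi_n)$. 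Hence $\tl\mu$ is a Gibbs measure for the normalised potential sequence $(\log\tl p_n)$.

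The main obstacle is the uniformity in part (\ref{cont1}): one must verify that Berbee's inductive argument from \cite{berbee89} yielding \qr{mixrate} applies identically on each restricted alphabet $\prod_n\pi_n^{-1}(y_n)$, with the \emph{same} transition matrix $P$ built from the ambient uniform variations $r_k$, and that the conditional bound survives averaging over the tails $(x\ti N,x'\ti N)$. Once this is secured, both the continuity statement and the identification of $\tl p_n$ as the factored $g$-function come out smoothly.
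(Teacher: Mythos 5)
Your proposal is correct and takes essentially the same route as the paper: realize $\mu(\cdot\mid \y)$ as a Gibbs measure for $(\phi_n)$ on the fiber $\prod_n\pi_n^{-1}(y_n)$, apply Berbee's estimate \qr{mixrate} to pairs of $[0,N)$-Gibbsian measures when $\y$ and $\y'$ agree on the first $N$ coordinates, and then obtain (\ref{gibbs1}) by the tower property together with the continuity from (\ref{cont1}). The only cosmetic difference is that you condition further on point tails $x\ti N$, $x'\ti N$ and average, and state the conclusion as a bound on the absolute difference of cylinder probabilities rather than on their log-ratio; the issue you flag (that \qr{mixrate} applies on the restricted alphabets with the ambient $r_k$, since restriction only decreases variations) is exactly what the paper uses implicitly.
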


\begin{proof}[Proof of Theorem~\ref{thm1}]
  The argument is in many ways similar to that of Verbitskiy in \cite{verb}:
  We note that $\mu(x|\y)$ is a Gibbs measure on the non-homogeneous
  symbolic space
  $$\pi^{-1}(\y)=\pi_0^{-1}(\y_0)\times \pi_1^{-1}(\y_1) \times \cdots, $$
  with respect to $(\phi_n)$ restricted to
  $\pi^{-1}(\y)$: For any $x\in\pi^{-1}(\y)$, the probability
  $\mu(x\mid\y,x\ti n)$ differ from $\mu(x\mid x\ti n)$ by the factor
  $1/\mu(\pi^{-1}(\y)\mid x\ti n)>0$ and hence it is proportional to
  the product $\exp(\sum_{k=0}^{n-1}\phi_k(x\ti k))$, since this holds for
  $\mu(x\mid x\ti n)$ by assumption.

  In order to prove continuity of the map
  $\y\mapsto \mu(x|\y)$, we use the explicit mixing rate in
  \qr{mixrate}. If $\y$ and $\y'$ are two different element in $\Y$
  such that $\y\sim_N \y'$ then $\mu(x|\y)$ and $\mu(x|\y')$ are
  $[0,N)$-Gibbs measures on the space
  $$ \pi_0^{-1}(\y_1) \times \cdots \times \pi_{N-1}^{-1}(\y_{N-1})
  \times A_N \times A_{N+1}\times\cdots $$ with respect to $(\phi_n)$. Hence,
  by \qr{mixrate}, we have
  $$ |\log \mu([x]_k | \y) - \log \mu([x]_k | \y')| \leq |\log
  \P(Z_N=0|Z_0=k)| $$ which tends to zero as $N\to\infty$ by Berbee's
  condition.
  
  To show \qr{gibbs1}, we note that $(\tl\phi_n)$ is
  clearly normalised, since
  $$ \sum_{y_n\in\tl{A}_n} \tl p_n(y_n,y\ti{n+1}) = 
  \int \underbrace{\left(\sum_{x_n} p_n(x_n,x\ti{n+1})\right)}_{=1}
  \d\mu(x\ti{n+1}|y\ti{n+1}) = 1,
  $$
  where $p_n(x)=e^{\phi_n(x)}$ and $\tl p_n(x) = e^{\tl\phi_n(x)}$.
  That $\tl p_n$ is continuous follows from the continuity of
  $y\mapsto\mu(\cdot| \y\ti{n+1})$.

  That the distribution of $\y$, i.e.\ $\tl\mu=\mu\circ\pi^{-1}$, is a
  Gibbs measure with the normalised potential $(\log \tl p_n)$ follows
  if $\tl\mu(\y_n|\y\ti{n+1})=\tl p_n(\y_n,\y\ti{n+1})$. But this is
  immediate from the definition of $\tl p$, since
  $$ \tl\mu(\y_n|\y\ti{n+1}) 
  = \int \sum_{x_n\in\pi_n^{-1}(y_n)} p_n(x_n,x\ti{n+1})
  \d\mu(x\ti{n+1}|\y\ti{n+1}) = \tl p_n(y_n,y\ti {n+1}).
  $$
\end{proof}

A symbolic map $\pi:X\to Y$ between two homogeneous spaces $X=A^S$ and
$Y=\tl A^S$ is
homogeneous if it has the form $y_i=\pi(x_i)$ where $\pi: A\to \tl A$
is a fixed surjective map between finite sets. From the explicit form
\qr{pdef1} of the induced potential sequence, it is clear that
homogeneity is preserved if the symbolic map $\pi$ is homogeneous. We
obtain, as a corollary, the result by Verbitskiy in \cite{verb} under
weaker assumptions.
\begin{corollary}
  Assume that $\mu$ is a $g$-measure where the $g$-function satisfies
  Berbee's condition. If $\pi:\X \to \Y$ is a homogeneous factor then
  $\tl\mu=\mu\circ\pi^{-1}$ is a $g$-measure.
\end{corollary}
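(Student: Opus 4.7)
The plan is to reduce the corollary to Theorem~\ref{thm1} by checking that the induced normalised potential sequence $(\log \tl p_n)$ is actually of the form $\tl g \circ T^n$ for a single continuous function $\tl g: Y \to (0,1]$. Since a $g$-measure is, by definition, a Gibbs measure for a normalised homogeneous one-point potential, and $\mu$-a $g$-measure already satisfies Berbee's condition on its variations, Theorem~\ref{thm1}(\ref{cont1})--(\ref{gibbs1}) immediately gives a normalised continuous potential sequence for $\tl\mu$. It therefore remains only to show that this sequence is homogeneous and that $\tl\mu$ is shift-invariant.

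First I would invoke Theorem~\ref{thm1}: with $\phi_n = \log g \circ T^n$ (so that $p_n = g\circ T^n$), Berbee's condition is satisfied and the theorem yields that $\tl\mu$ has a continuous normalised potential sequence given by
\[
\tl p_n(y\ti n) = \int \sum_{x_n\in\pi^{-1}(y_n)} g(T^n(x_n,x\ti{n+1}))\,d\mu(x\ti{n+1}\mid y\ti{n+1}).
\]
The second step is to compare $\tl p_n$ and $\tl p_0 \circ T^n$. Since $\pi$ is homogeneous, $\pi_n = \pi_0$ for every $n$; since $\mu$ is shift-invariant, the conditional measure $\mu(\,\cdot\mid y\ti{n+1})$ restricted to coordinates $\geq n$ pushed down by $T^n$ has the same law as $\mu(\,\cdot\mid (T^n y)\ti 1)$ as a measure on $X$. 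Combined with $g\circ T^n(x_n,x\ti{n+1}) = g(x_n,x\ti{n+1})$ evaluated at the appropriately shifted argument, a direct change of variables gives $\tl p_n(y\ti n) = \tl p_0((T^n y)\ti 0) = \tl g(T^n y)$, where we set $\tl g := \tl p_0$.

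Third I would check that $\tl g$ has the required properties: continuity is immediate from Theorem~\ref{thm1}(\ref{gibbs1}), positivity follows from positivity of $g$ and the fact that $\pi^{-1}(y_0)$ is non-empty, and $\sum_{y_0\in\pi_0^{-1}(y_0)\ldots}$—more precisely $\sum_{y_0\in\tl A}\tl g(y_0, y\ti 1) = 1$—is exactly the normalisation relation checked in the proof of Theorem~\ref{thm1}. Finally, $\tl\mu$ is shift-invariant because $\mu$ is and $\pi$ commutes with the shift: $\tl\mu\circ T^{-1} = \mu\circ\pi^{-1}\circ T^{-1} = \mu\circ T^{-1}\circ \pi^{-1} = \mu\circ\pi^{-1} = \tl\mu$. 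Together these show that $\tl\mu$ is a $g$-measure with $g$-function $\tl g$.

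The only step requiring genuine care is the identification $\tl p_n = \tl g\circ T^n$, i.e.\ the bookkeeping that turns the formula in \qr{pdef1} into a shift-invariant object; this boils down to using homogeneity of $\pi$ to replace $\pi_n^{-1}$ by $\pi_0^{-1}$ and shift-invariance of $\mu$ to rewrite the conditional expectation with respect to the tail $y\ti{n+1}$ as one with respect to $(T^n y)\ti 1$. Everything else is a direct consequence of Theorem~\ref{thm1}.
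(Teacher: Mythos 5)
Your proposal is correct and takes essentially the same approach as the paper, which disposes of the corollary by observing that the explicit formula \eqref{pdef1} makes homogeneity of the induced potential sequence evident when $\pi$ is homogeneous; your identification $\tl p_n=\tl p_0\circ T^n$ via $\pi_n=\pi_0$ and shift-invariance of $\mu$ is precisely the bookkeeping the paper leaves implicit.
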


\subsection{The counterexample to a conjecture by Verbitskiy}
In \cite{verb} (p.\ 328), Verbitskiy argues that it would be natural
to conjecture that for any homogeneous symbolic map $\pi$ and any
$g$-measure $\mu$ with respect to a $g$-function having square
summable variations the measure $\mu\circ\pi^{-1}$ is a
$g$-measure. The condition of square summability variations, i.e.\
$\sum_n (\var_n \, g)^2<\infty$, is a condition that in \cite{johob}
was used to prove uniqueness of the $g$-measure.  Verbitskiy
speculates that the condition of square summability could be closed
under taking 1-block factors, perhaps by adapting arguments
from Fan and Pollicott \cite{fan}.

We show that this is not the case using the following counterexample,
where we have a square summable $g$-function. In fact, the
$g$-function satisfies $\var_n g = \Ordo{1/n}$ and the sequence of
variations is thus only a factor away from satisfying Berbee's
condition.  The construction also connects our investigation with the
principle proposed by van Enter {\em et al}.\ in \cite{enter} and
discussed in \cite{verb} that non-Gibbsianity of factors is linked to
the presence of a ``hidden phase transition''.

\begin{theorem}
  There exists a $g$-function $g$ with
  $\var_n g = \Ordo{1/n}$ and with unique $g$-measure $\mu$, and a
  symbolic map $\pi: X\to Y$, so that the corresponding 1-block factor
  $\mu\circ \pi^{-1}$ is not a $g$-measure.
\end{theorem}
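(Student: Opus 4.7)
The strategy is to leverage the one-sided phase transition established in Theorem~\ref{th1} to manufacture a $g$-function whose factor exhibits non-Gibbsianity. Fix $\alpha=2$ and choose $\beta>\beta_c^+(2)$, which is finite by Theorem~\ref{th1}. The homogeneous one-point potential $\varphi_0$ of (\ref{potone}) then satisfies $\var_n\varphi_0=\Ordo{1/n}$ because $\sum_{j\ge n}j^{-2}=\Ordo{1/n}$, and the set $\mc I(\beta,2,\ZZ_+)$ of one-sided Gibbs measures contains two distinct extremal elements $\nu_+ \succ \nu_-$ with strict stochastic dominance.

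From these data I build a continuous, strictly positive, normalised $g$-function $g$ on a symbolic space $X$ (obtained by coupling the Ising spins to an auxiliary symbol that absorbs the local partition function $Z_n(\varphi)$, in the spirit of an Edwards--Sokal or random-cluster type coupling), together with a homogeneous symbolic map $\pi:X\to\{-1,+1\}^{\ZZ_+}=:Y$, so that (a) $\var_n g = \Ordo{1/n}$ and (b) conditionally on the tail $y^{(n+1)}$, the spin marginal of $\mu$ agrees with the one-sided Ising conditional for $\varphi$ with boundary condition $y^{(n+1)}$. Square summability of the variations of $g$ together with the uniqueness theorem in \cite{johob} forces the $g$-measure $\mu$ to exist and be unique.

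To see that $\tilde\mu=\mu\circ\pi^{-1}$ fails to be a $g$-measure I apply Theorem~\ref{thm1} in the contrapositive: a $g$-measure representation of $\tilde\mu$ would force the induced one-point potential
\begin{equation*}
  \tilde p_n(y_n,y^{(n+1)}) = \int\sum_{x_n\in\pi_n^{-1}(y_n)} e^{\phi_n(x_n,x^{(n+1)})}\,d\mu(x^{(n+1)}\mid y^{(n+1)})
\end{equation*}
to be continuous in $y^{(n+1)}$. I then select two sequences of tails in $Y$ converging to the constant configurations $\overline{+1}$ and $\overline{-1}$, respectively. Along each sequence, property (b) makes the conditional $\mu(\cdot\mid y^{(n+1)})$ converge weakly to the conditional of the $\nu_+$- or $\nu_-$-Gibbs measure, so the integrals converge to two values obtained by integrating the (strictly monotone in the spin tail) test function against $\nu_+$ and $\nu_-$, respectively. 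Strict dominance $\nu_+\succ\nu_-$ then forces these two limits to differ, exhibiting the required jump discontinuity in $\tilde p_n$ and contradicting continuity.

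The principal technical obstacle lies in the construction of paragraph two: producing a continuous, normalised $g$-function whose one-point potential preserves the $\Ordo{1/n}$ variation bound while faithfully encoding the one-sided Ising conditional structure under $\pi$-conditioning requires care, most naturally via an auxiliary-coordinate coupling whose contribution to the variation is controlled by the fast-decaying tail of the Ising interaction. Once this construction is in place, the non-Gibbsianity of $\tilde\mu$ reduces to the strict stochastic dominance $\nu_+\succ\nu_-$ paired with the monotone test function identified above, and uniqueness of $\mu$ is a direct consequence of \cite{johob}.
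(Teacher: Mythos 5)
Your overall strategy --- use the one-sided phase transition from Theorem~\ref{th1}, build a $g$-function with $\var_n\log g=\Ordo{1/n}$ around the Ising one-point potential, and detect non-Gibbsianity of the factor through the strict dominance $\nu_+\succ\nu_-$ --- is the right one, but the argument as written has a fatal flaw in the final step and leaves the actual construction undone. You take $Y=\{-1,+1\}^{\ZZ_+}$ and compare the limits of $\tl p_n$ along two sequences of tails converging to $\ol{+1}$ and to $\ol{-1}$ respectively. These are two \emph{different} points of $Y$, so the fact that the two limits disagree says nothing about continuity: having $\tl p_n(\cdot,\ol{+1})\neq\tl p_n(\cdot,\ol{-1})$ is perfectly compatible with $\tl p_n$ being continuous. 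To exhibit a discontinuity you need two sequences of tails converging to the \emph{same} point of $Y$ along which the conditional measures $\mu(\cdot\mid y^{(n+1)})$ converge to $\nu_+$ and to $\nu_-$ respectively. This is exactly what the paper's choice of factor is engineered to achieve: the factor alphabet contains a neutral symbol $0$ which hides the sign, so that the tails $\ol0_N^{+}$ and $\ol0_N^{-}$ (neutral up to coordinate $N$, then constantly $+\tl1$ resp.\ $-\tl1$) both converge to the single point $\ol0$ while carrying the two opposite boundary conditions for the hidden spins. A factor onto the spin space, which reveals every spin, cannot produce this effect.

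The second gap is that the construction in your second paragraph is only gestured at, and what you describe is not what is needed. You ask that, conditionally on the factor tail, the \emph{spin marginal} of $\mu$ be the one-sided Ising conditional --- essentially that $\mu$ push forward to an Ising Gibbs measure. Producing a continuous normalised $g$-function with $\Ordo{1/n}$ variations with that property amounts to normalising the Ising potential, which is precisely the difficulty one must circumvent. The paper sidesteps it with a concrete device: double the spin alphabet to $A=\{+1,-1,+\tl1,-\tl1\}$ with sign map $\alpha$, choose the additive constant $K$ in \qr{potone} so that $q=e^{\varphi_0}<1/2$, and set $g=q\circ\alpha$ on untilded symbols and $g=\tfrac12-q\circ\alpha$ on tilded ones. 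This is automatically normalised (the four values sum to $1$), keeps $\var_n\log g=\Ordo{1/n}$, and makes the conditional law of the hidden untilded spins, given that the tail consists of tilded symbols of sign $\pm1$, equal to the finite-volume Ising measure $\nu_N^{\pm}$ with boundary condition $\ol{\pm1}$ --- even though the unconditional spin marginal of $\mu$ is merely i.i.d.\ uniform. The factor then collapses $\pm1$ to $0$ and retains $\pm\tl1$, after which the discontinuity of $\tl g(+\tl1,\cdot)$ at $\ol0$ follows from $\int q(+1,u)\,d\nu^+(u)\neq\int q(+1,u)\,d\nu^-(u)$. Without a construction of this kind, and without a factor possessing a neutral symbol, your argument does not close.
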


\begin{proof}
  Let $(\varphi(T^n u))_n$ be the homogeneous potential of the one-sided
  long-range Ising model on $U=\{-1,+1\}^{\ZZ_+}$ defined by
  \qr{potone}.  We can choose the constant $K$ in \qr{potone}, so that
  \begin{equation}\label{sumlessthanone}
    q(\pm1,u) < 1/2, \quad \text{for all $u\in U$,} 
  \end{equation}
  where $q(u)=\exp(\varphi_0(u))$.  Let
  $\nu^\pm_N\in\mc G_{[0,n)}(\varphi)$ denote the $[0,N)$-Gibbsian
  measure on $U$ obtained from the boundary condition
  $\xi = \ol{\pm 1}$. By construction $\nu^{\pm}_N \to \nu^{\pm}$ and
  we choose $\beta>\beta^+_c$ from Theorem~\ref{th1}, so that
  $\nu^+ \succ \nu^-$.

Consider the homogenous space $X=A^{\ZZ_+}$ on four symbols
$$ A=\{+1, -1, +\tilde 1, -\tilde 1\}  $$
and the symbolic map $\alpha: X \to U$ defined by
$\alpha(+1)=\alpha(+\tl1)=+1$ and $\alpha(-1)=\alpha(-\tl1)=-1$.  We
define a $g$-function $g(x)$ from $q(u)$ by setting 
$$
g(x) =
\begin{cases}
  q(\alpha(x)),  &\mbox{if } x_0 = \pm 1, \\
  \frac{1}{2}-q(\alpha(x)), &\mbox{if } x_0 = \pm \tilde 1.
\end{cases}
$$
It is obvious that $g$ is a $g$-function.

A simple estimate shows that the log-variations of $q$ in \qr{potone}
satisfy
\begin{equation*}\label{varon}
  \var_n \log q \leq \beta \frac 2n, 
\end{equation*}
and thus the variations of the $g$-function $g$ satisfies
$\var_n \log g = \Ordo{1/n}$.  By \cite{johob} (or \cite{jop2}), we
have a unique $g$-measure $\mu$ on $X$.

Let $Y$ be the symbolic space $Y=B^{\ZZ_+}$ on three symbols
$B=\{0, +\tl 1, -\tl 1\}$ and consider the the shift-invariant factor
$\pi: X \to Y$ defined by $\pi(\pm 1)=0$ and
$\pi(\pm \tl 1)=\pm \tl1$.  Let $\tl\mu=\mu\circ\pi^{-1}$ be the
distribution of $y\in Y$ and let
$$ 
\tl g(y)=\tl\mu(y_0|y\ti1) = \mu\left(\pi(x_0)=y_0\mid \pi(x\ti1) =
  y\ti1\right).
$$  

We claim that the induced $g$-function $\tl g(y)$ for the factor on
$Y$ is discontinuous at $y=\ol0\in Y$, where $\ol0=(0,0,0,\dots)$.

Let $\ol0_N^{\pm} \in Y$ be defined by $y_i=0$ for
$i=0,1,\dots,N-1$ and $y_i=\pm \tl1$ for $i\geq N$.  Since
$\alpha(\pi^{-1}(\ol0_N^+))=\{+1,-1\}^{[0,N)}\times\ol{+1}$, and
since $\alpha(\pi^{-1}(\ol0_N^-))=\{+1,-1\}^{[0,N)}\times\ol{-1}$,
it follows that
  $$ 
  \tl g(+\tl1,\ol0_N^\pm) = \tl \mu({y_0=+\tl1} \mid y\ti1=\ol0_N^\pm) =
  \mu({x_0=+\tl1} \mid \pi(x\ti1) =y\ti1=\ol0_N^\pm)
  $$
  $$
  =\frac 12 - \int q(+1,u) \d\nu_{N}^\pm(u)
  \to
  \frac 12 - \int q(+1,u) \d\nu^\pm(u),
  $$
  as $N\to\infty$.

  Thus, since $u\mapsto q(+1,u)$ is a strictly increasing function on
  $U$, we obtain that
  $$ \lim_{N\to\infty} \tl g(+\tl1,\ol0_N^+) 
  \not= \lim_{N\to\infty} \tl g(+\tl1,\ol0_N^-),
  $$
  which shows the discontinuity of $\tl g$ since $\ol0_N^\pm\to \ol0$
  as $N\to\infty$.
\end{proof}

\section{The one-sided long-range Ising model}

In order to prove Theorem~\ref{th1}, we work with the random cluster
model (as in Aizenman {\em et al} \cite{ACCN}) instead of working with the Ising
model directly.

\subsection{The random cluster model}
A random cluster model on $S$ is a certain type $\mc R(p,q,S)$ of
distribution of a random subgraph $t$ of $S^{(2)}$. We consider
$t=(t_{ij})$ as an element of $T(S):=\{0,1\}^{S^{(2)}}$ and obtain
$\mc R=\mc R(p,q,S)$ as the class of Gibbs measures to the potential
on $T(S)$ given by
$$ 
\log q \cdot c(t) + \sum_{ij} \log(1-p_{ij})(1-t_{ij}) + \log p_{ij}
t_{ij}.
$$ 
Here $c(t)$ denotes the number of connected components (clusters) in
the graph $t$, which readily can be defined as a potential, 
although it is not necessarily continuous.  The random
cluster model has two parameters: The edge-probability
$p: S^{(2)}\to [0,1]$, $ij\mapsto p_{ij}$, and the parameter $q$ which
is a number $q\geq 1$. Note that if $S$ is finite, the distribution of
the random graph $t\sim \psi\in\mc R(p,q,S)$ is a probability
proportional to
$$
q^{c(t)} \cdot \prod_{t_{ij}=1} p_{ij} \cdot \prod_{t_{ij}=0}
(1-p_{ij}).
$$
If $q=1$ then we obtain the standard Bernoulli random graph
distribution on $S$.

We obtain the (free boundary) random cluster distribution
$\psi=\psi(p,q,S) \\ \in\mc R(p,q,S)$ as the limit having fixed boundary
$\xi=\ol 0$ with respect to the sequence $\Lam^{(2)}\nearrow S^{(2)}$
for $\Lam\nearrow S$. (The so-called wired distribution $\psi^w$ is
obtained by taking $\xi=\ol 1$ and
$\Lam_n^{(2) } = \ol{(\Lamc)^{(2)}}\nearrow S^{(2)}$.) By the
monotonicity in $p$, see below, one can deduce that the free boundary
limit $\psi$ is well defined. In all cases of interest in this paper,
$\psi$ is actually the unique random cluster distribution of type
$\mc R(p,q,S)$.

The following three stochastic dominance relations for random cluster
models are well-known (see \cite{ACCN}).  Firstly, the random cluster
distribution $\psi(p,q,S)$ increases with $p$, i.e.\
\begin{equation}\label{incrp}
  p\leq p' \implies \psi(p,q,S) \preceq \psi(p',q,S).
\end{equation}
It decreases in $q$, so that
\begin{equation}\label{incrq}
  q\leq q' \implies \psi(p,q,S) \succeq \psi(p,q',S).
\end{equation}
Finally, we can compare a random cluster distribution with the
corresponding Bernoulli distribution. That is,
\begin{equation}\label{incr2p}
  \psi(p,q,S) \succeq \psi \left(\frac{p}{p+(1-p)q},1,S\right),
\end{equation}
where
$$ \left(\frac{p}{p+(1-p)q}\right)_{ij} = \frac{p_{ij}}{p_{ij}+(1-p_{ij})q}. $$

\subsubsection{The random cluster model and the Ising model coupled}

The long range Ising model $\mc I = \mc I(\beta,\alpha,S)$, $S=\ZZ$ or
$S=\ZZ_+$, can be constructed from the random cluster model. In fact,
we may couple $\nu^+,\nu^-\in \mc I$ using a random cluster
distribution $\psi(\rho,2,S)$ where the edge-probability
$\rho=\rho(\beta,\alpha)$ is given by
\begin{equation}
  \rho_{ij} = 1-\exp\left( -\frac{\beta}{|i-j|^\alpha} \right). \label{rhodef}
\end{equation}
That is, the probability of non-occurrence of the edge $ij$ is
$\exp(-\beta/|i-j|^\alpha)$.

We can (\cite{ACCN}) construct a spin-configuration $u^\pm\in U$
distributed according to $\nu^\pm$ as follows: Choose a graph
$t\sim\psi$ on vertex-set $S$ according to the distribution
$\psi=\psi(\rho,2,S)$ and assign each \emph{infinite} cluster in $X$
the fixed spin-value $\pm1$. For each finite cluster in $t$ a
spin-value in $\{-1,+1\}$ is chosen independently and uniformly at
random.  Then the spin-configuration $u^\pm = u^\pm(t)$ is defined by
setting $u_i$ equal to the spin of the cluster containing $i$. The
spin-configuration $u^-$ is thus equal to $u^+$ except that for $i$
belonging to the infinite cluster the spin $u_i$ is changed from $+1$
to $-1$. It is hence clear that
$$\nu^+ \succ \nu^- \quad \text{precisely if $\psi(A_\infty)=1$} $$
where $A_\infty$ is the event
$$ A_\infty = \text{``$t$ has an infinite cluster''.}  $$
Note that $A_\infty$ is a tail event and that satisfies a zero-one
law, i.e.\ the probability $\psi(A_\infty)$ is either zero or one.

\subsection{Proof of Theorem~\ref{th1}}
Let $\alpha$ be fixed where $1<\alpha\leq2$. Let $\psi(\beta,q,S)$
denote the random cluster distribution $\psi(\rho,q)$, with
$\rho=\rho(\alpha,\beta,S)$ as in \qr{rhodef} above. Note that $\rho$
is increasing as a function of $\beta$.  As stated above, it is known
(see \cite{ACCN}) that if $t$ has distribution
$\psi=\psi(\beta,2,\ZZ)$ then $\psi(A_\infty)=1$, precisely when
$\beta\in[\beta_c(\alpha),\infty)$.

We assume $\beta \geq \beta_c$ and we shall prove that for
$\psi=\psi(8\beta,2,\ZZ_+)$, we have $\psi(A_\infty) = 1$. Since
$A_\infty$ is an increasing event, it is enough to establish a
stochastic dominance $\psi(8\beta,2,\ZZ_+)\succeq \tl\psi$ for some
distribution $\tl\psi$ on $T(\ZZ_+)$, where $\tl\psi(A_\infty)=1$.

Let $F:T(\ZZ)\to T(\ZZ_+)$ be defined by
\begin{equation}\label{Fdef}
  F(t)_{ab} = 1 - \prod_{|i|=a,|j|=b} (1-t_{ij}),\quad t\in T(\ZZ).
\end{equation}
Then $F$ corresponds to the graph homomorphism (loops ``silently
removed'') induced by the vertex map $i\mapsto |i|$. Thus for any pair
$i,j$ of vertices connected by a path in $t$, the images $|i|$ and
$|j|$ under $F$ remain connected in $F(t)$. It is therefore clear that
if $t$ has an infinite cluster then $F(t)$ has an infinite cluster.
Hence, if we construct $\tl\psi$ as the push-forward
$\tl\psi=\psi(\beta,1,\ZZ)\circ F^{-1}$ of the long-range Bernoulli
random graph on $\ZZ$ then $\tl\psi(A_\infty)=1$ whenever
$\psi(\beta,1,\ZZ)(A_\infty)=1$. Since
$\psi(\beta,1,\ZZ)\succeq\psi(\beta,2,\ZZ)$ and since
$\beta\geq\beta_c$, we can deduce that $\tl\psi(A_\infty)=1$. 
By independence, it follows from \qr{Fdef} that $\tl\psi$ is a
Bernoulli random graph distribution $\tl\psi=\psi(\gamma,1)$ with
edge-probability $\gamma$
\begin{equation}\label{gammadef}
  \gamma_{ij} = 1- \prod_{|i'|=i,|j'|=j} \exp{-\frac{\beta}{|i'-j'|^\alpha}}
  = 1-\exp\left\{-\beta( 2|i-j|^{-\alpha} + 2|i+j|^{-\alpha})\right\},
\end{equation}
for $i,j\geq0$.  From \qr{incr2p} and \qr{incrp}, it follows that
$$\psi'\succeq \psi(4\beta,1,\ZZ_+)$$ 
and we are done since \qr{gammadef} implies that
\begin{equation*}
  \gamma_{ij} \geq 
 1-\exp\left(-4\frac{\beta}{|i-j|^{\alpha}}\right)
\end{equation*}
and thus
$$ \psi(4\beta,1,\ZZ_+) \succeq \tl\psi. $$
This concludes the proof. \qed

\noindent
\newline

\noindent
Anders Johansson, Department of Mathematics, University of G\"avle,
801 76 G\"avle, Sweden. Email-address: ajj@hig.se\newline

\noindent
Anders \"Oberg, Department of Mathematics, Uppsala University, P.O.\
Box 480, 751 06 Uppsala, Sweden. E-mail-address:
anders@math.uu.se\newline

\noindent
Mark Pollicott, Mathematics Institute, University of Warwick,
Coventry, CV4 7AL, UK. Email-address: mpollic@maths.warwick.ac.uk
\end{document}